\theoremstyle{plain}
\newtheorem{theorem}{Theorem}[section]
\newtheorem*{theorem*}{Theorem}
\newtheorem*{proposition*}{Proposition}
\newtheorem{corollary}[theorem]{Corollary}
\theoremstyle{definition}
\newtheorem{definition}[theorem]{Definition}
\newtheorem{remark}[theorem]{Remark}
\newtheorem{maintheorem}{Theorem}
\newtheorem{maincorollary}[maintheorem]{Corollary}
\newcommand\restr[2]{{
  \left.\kern-\nulldelimiterspace 
  #1 
  \vphantom{\big|} 
  \right|_{#2} 
  }}
\pgfplotsset{compat=1.15}
\tikzset{c/.style={every coordinate/.try}} 
\newcommand{\zbimh}[4] 
{
\begin{pgfonlayer}{bg} 
\foreach \i in {1,...,7}
	{
	\draw #1 edge [bend left=#3, draw=none] node[pos=\i/8](a\i){}#2;
	\draw #1 edge [bend left=#4, draw=none] node[pos=\i/8](b\i ){}#2;
	}
\begin{scope}[every coordinate/.style={shift={(0,0.08*rand-0.08*rand)}}]
\filldraw[pattern=north west lines, pattern color=cyan!80!white, draw=cyan] plot [smooth] coordinates {#1 ([c]a1) ([c]a2) ([c]a3) ([c]a4) ([c]a5) ([c]a6) ([c]a7) #2} -- plot [smooth] coordinates {#2 ([c]b7) ([c]b6) ([c]b5) ([c]b4) ([c]b3) ([c]b2) ([c]b1) #1};
\end{scope}
\end{pgfonlayer}
}
\newcommand{\zbimv}[4] 
{
\begin{pgfonlayer}{bg} 
\foreach \i in {1,...,7}
{
\draw #1 edge [bend left=#3, draw=none] node[pos=\i/8](a\i){}#2;
\draw #1 edge [bend left=#4, draw=none] node[pos=\i/8](b\i ){}#2;
}
\begin{scope}[every coordinate/.style={shift={(0.08*rand-0.08*rand,0)}}]
\filldraw[pattern=north west lines, pattern color=cyan!80!white, draw=cyan] plot [smooth] coordinates {#1 ([c]a1) ([c]a2) ([c]a3) ([c]a4) ([c]a5) ([c]a6) ([c]a7) #2} -- plot [smooth] coordinates {#2 ([c]b7) ([c]b6) ([c]b5) ([c]b4) ([c]b3) ([c]b2) ([c]b1) #1};
\end{scope}
\end{pgfonlayer}
}
\newcommand{\zbimc}[4] 
{
{\tikzset{every loop/.style={in=#2,out=#3,looseness=#4}}
 \begin{pgfonlayer}{bg}
 \foreach \i in {1,...,5}
{
\draw #1 edge [loop, draw=none] node[pos=(\i-0.5)/5](a\i){} ();
}
\begin{scope}[every coordinate/.style={shift={(0.08*rand-0.08*rand,0.08*rand-0.08*rand)}}]
\filldraw[pattern=north west lines, pattern color=cyan!80!white, draw=cyan] plot [smooth] coordinates {#1 ([c]a1) ([c]a2) ([c]a3) ([c]a4) ([c]a5) #1};
\end{scope}
\end{pgfonlayer}
}
}
\newcommand{\vertex}{\node [circle, draw, inner sep=0pt, minimum size=4pt, fill=black]} 
\title[Expansion properties of Whitehead moves]{Expansion properties of Whitehead moves \\ on cubic graphs}
\author{Laura Grave de Peralta} 
\address{Impact Initiatives, 9 Chemin de Balexert, 1219 Geneva, Switzerland}
\author{Alexander Kolpakov}
\address{University of Austin, 522 Congress Ave., Austin, TX 78701, United States}
\date{\today}
\begin{document}

\begin{abstract}
    The present note concerns the ``graph of graphs'' that has cubic graphs as vertices connected by edges represented by the so--called Whitehead moves. Here, we prove that the outer--conductance  of the graph of graphs tends to zero as the number of vertices tends to infinity. This answers a question of K.~Rafi in the negative. 
\end{abstract}

\maketitle

\setcounter{tocdepth}{1}



\section{Introduction}\label{section-wm:intro}
The present note is concerned with the graph of connected $3$--regular, or cubic, graphs. Such a ``graph of graphs'' $\Gamma_n$, represents what can be informally described as the ``deformations space'' of cubic graphs on $2n$ vertices under the Whitehead moves. In particular, we shall investigate its expansion properties. 


Let $g$ be a cubic graph (we allow graphs with multiple edges and loops), with set of vertices $V(g)$, and set of edges $E(g)$. Then, for an edge $e \in E(g)$ which is not a loop, there are two possible Whitehead moves $w^{(1)}_e$ and $w^{(2)}_e$ on $e$, depicted in \Cref{wm-fig:Whitehead-move}, which are local transformations of $g$. 

\begin{figure}[ht]
\centering
\includegraphics[scale=0.35]{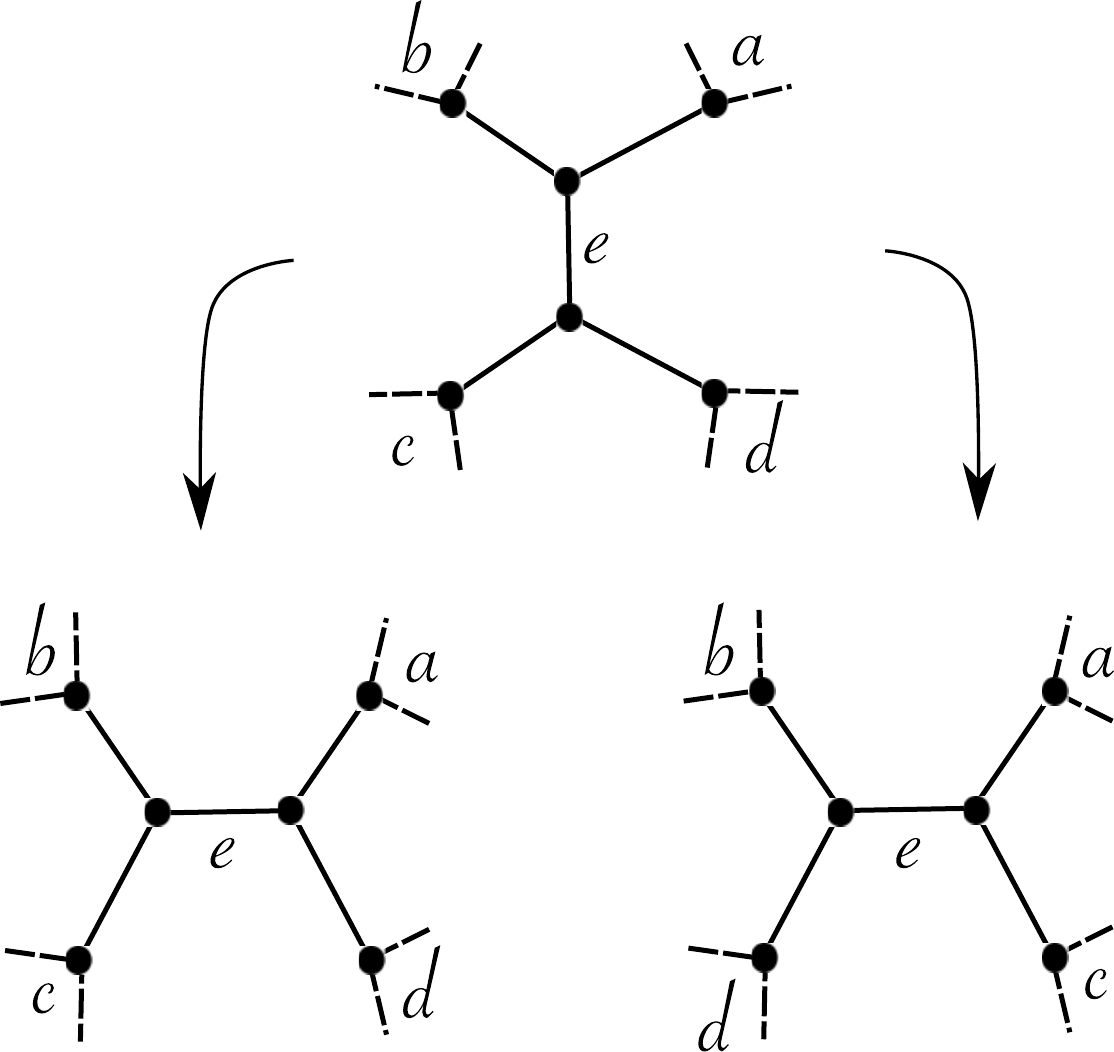}
\caption{ Two possible Whitehead moves performed on an edge $\varepsilon$. } \label{wm-fig:Whitehead-move}
\end{figure}

If $e \in E(g)$ is a loop, we define the respective Whitehead moves as having no effect on $g$. Let us note that this convention is not arbitrary. Indeed, we may represent $g$ as a multigraph: a set of half--edges incident to vertices and paired by a fixed--point--free involution. Then a Whitehead move does not affect the pairing of half--edges and only changes which vertices the half--edges are adjacent to. If $e$ is a loop, the two vertices
$u$, $v$ adjacent to its half--edges coincide: thus changing the vertex assignment of one of the other two half--edges at $u$ and of one of the other two half--edges at $v$ does nothing.

Let the vertices of $\Gamma_n$ be all possible cubic graphs on $2n$ vertices ($n\geq 1$) up to isomorphism, where two vertices $g_1, g_2 \in V(\Gamma)$ are connected by a directed edge for each Whitehead move $w_\varepsilon$, for $\varepsilon \in E(g_1)$, such that $w_\varepsilon(g_1)$ is isomorphic to $g_2$. It is easy to see that being related by a Whitehead move is an equivalence relation on the set of isomorphism classes of cubic graphs on $2n$ vertices ($n \geq 1$). 

In this note, a directed version of this graph of graphs is considered, as it is common to do so in the context of dynamical systems. Moreover, as Whitehead moves are clearly reversible, this choice does not affect the expansion property of $\Gamma_n$.

We introduce the notion of outer--conductance $\phi_{\rm out}$ because there is no standard definition of expansion for directed non--regular multigraphs, despite them being a natural class of objects to consider. 

Outer--conductance captures how easy it is to escape a given subset of the graph. Therefore, having the outer--conductance tend to $0$ is a way of saying that the graphs have poor mixing and therefore cannot be ``expanders'' in this sense. 

Namely, we show that the following theorem holds which answers the question about ``edge expansion'' of $\Gamma_n$ posed by K.~Rafi \cite{Rafiwebpage}. 

\begin{maintheorem}[\Cref{wm-thm:graph-of-graphs}]
Let $\Gamma_n$ be the graph of cubic graphs on $2n$ ($n\geq 1$) vertices connected by Whitehead moves. Then $\Gamma_n$ is connected and $\phi_{\rm out}(\Gamma_n) \rightarrow 0$, as $n\to \infty$.
\end{maintheorem}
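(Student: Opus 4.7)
The plan is to exhibit, for each $n$, a subset $S_n \subset V(\Gamma_n)$ whose outer conductance tends to zero, so that $\phi_{out}(\Gamma_n) \leq \phi_{out}(S_n) \to 0$. The guiding observation is that a single Whitehead move modifies only two edges of the underlying cubic graph and hence can change the total number of loops by at most two. Any membership criterion for $S_n$ phrased in terms of the loop count is therefore rigid under almost every move.

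Concretely, I would take
\[
S_n \;=\; \{\, g \in V(\Gamma_n) : g \text{ has at least one loop}\,\}.
\]
Each vertex $g$ of $\Gamma_n$ has out-degree $2|E(g)| = 6n$ (two Whitehead moves per edge, with moves on a loop of $g$ contributing self-loops at $g$ in $\Gamma_n$), so the out-volume of $S_n$ is exactly $6n\,|S_n|$.

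The main step is to bound the edge boundary of $S_n$. A Whitehead move on $g \in S_n$ leaves $S_n$ only if it destroys every loop of $g$, which by the observation above forces $g$ to have at most two loops. A short inspection of \Cref{wm-fig:Whitehead-move} identifies the candidate moves: if $g$ has a single loop at a vertex $u$, then only the two Whitehead moves on the unique non-loop edge at $u$ can destroy it; if $g$ has two loops, they can be destroyed simultaneously only when the two loop-bearing vertices are joined by an edge (a \emph{dumbbell}), and again only by the two moves on that edge. Hence each such $g$ contributes at most two boundary edges. Letting $T_n \subseteq S_n$ be the set of graphs with one or two loops,
\[
\phi_{out}(S_n) \;\leq\; \frac{2\,|T_n|}{6n\,|S_n|} \;\leq\; \frac{1}{3n},
\]
using the trivial bound $|T_n| \leq |S_n|$, and the right-hand side tends to $0$ as $n \to \infty$.

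The part I expect to be the most delicate is an admissibility check: depending on the precise formulation of $\phi_{out}$, the set $S_n$ may need to satisfy $\mathrm{vol}(S_n) \leq \tfrac{1}{2}\,\mathrm{vol}(V(\Gamma_n))$. Here $|S_n|/|V(\Gamma_n)|$ converges to $1 - e^{-1/2} \approx 0.39$, a consequence of the asymptotic Poisson distribution of loops in random cubic multigraphs (configuration model), together with the standard descent from labeled graphs to isomorphism classes; so the constraint is met for all large $n$. If the definition is more restrictive, the same loop-based idea adapts by replacing ``has at least one loop'' with a slightly richer local feature (for instance, containing a fixed rigid substructure) whose destruction still requires only $O(1)$ moves per source graph.
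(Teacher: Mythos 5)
Your proposal is correct in substance and follows the same template as the paper's proof: exhibit a set $S_n$ occupying a constant fraction of $V(\Gamma_n)$, with a complement that is also a constant fraction, such that only $O(1)$ of the $6n$ outgoing Whitehead moves at any vertex can escape $S_n$. The paper's witness set is $B_n=\{g \ : \ g \text{ has a bridge}\}$ rather than your $S_n=\{g \ : \ g \text{ has a loop}\}$, but these sets differ by only $o(|V(\Gamma_n)|)$ graphs --- indeed the paper \emph{measures} $B_n$ precisely by sandwiching it between the looped graphs and the non--Hamiltonian ones and invoking the Poisson limit for loops. Where the two arguments genuinely diverge is the boundary estimate: the paper runs a five--case analysis (via an equivalence relation built from edge--disjoint paths) of when a Whitehead move destroys a bridge, whereas your observation --- a move on $\varepsilon$ only re--routes half--edges at the two endpoints of $\varepsilon$, hence can destroy at most two loops and only loops sitting at those endpoints --- is shorter and entirely local. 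That is a real simplification, and both yield the same bound $|\partial| \leq 2|T_n| \leq 2|S_n|$.

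Two corrections. First, the Poisson parameter for the number of loops in a random cubic multigraph is $(d-1)/2=1$, not $1/2$, so $|S_n|/|V(\Gamma_n)|\to 1-e^{-1}\approx 0.632$ rather than $1-e^{-1/2}$. Consequently $S_n$ is the \emph{larger} side; your admissibility worry is resolved differently than you expect, since the definition of $\phi_{out}$ used here puts $\min\{d(S),d(V\setminus S)\}$ in the denominator and imposes no half--volume constraint --- what you actually need is that the loop--free graphs form a positive proportion, which the same Poisson limit provides. Your displayed bound should therefore read $\phi_{out}(S_n)\le 2|T_n|\big/\bigl(6n\,|V(\Gamma_n)\setminus S_n|\bigr)\sim (e-1)/(3n)$, which still tends to $0$, matching the paper's constant exactly. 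Second, you should justify (as the paper does, citing the rigidity of almost all labelled cubic graphs) why the Poisson statement for \emph{labelled} multigraphs transfers to the uniform distribution on isomorphism classes; as stated, your ``standard descent'' is doing nontrivial work.
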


Since $\Gamma_n$ contains loops, it is aperiodic. As a byproduct of this and Theorem~\Cref{wm-thm:graph-of-graphs}, we obtain one more statement about the combinatorics of $\Gamma_n$.

\begin{maincorollary}[\Cref{cor:graph-of-graphs-1}]
For each $n \geq 1$, the graph $\Gamma_n$ is strongly connected and aperiodic. There exists a Perron number $\rho_n$ such that the number of length $\ell$ paths in $\Gamma_n$ is asymptotic to ${\rm const} \cdot \rho^\ell_n$, as $\ell \to \infty$. 
\end{maincorollary}

Finally, in \Cref{wm-section:related}, we mention some objects connected to $\Gamma_n$.

First and foremost, the graph of graphs is related to moduli spaces of surface triangulations \cite{DP1, DP2} and pants decomposition of surfaces \cite{RafiTao}, although not entirely equivalent to said objects.
The definition considered here is a directed version of the graph of graphs considered in \cite{RafiTao}, and is quasi--isometric to the latter. 

Also, the graph of graphs considered in \cite{RafiTao} is quasi--isometric to the ``wide'' component of the thick part of the moduli space of a genus $g\geq 2$ surface. This means that $\Gamma_n$ describes pants decompositions of the genus $g = n+1$ surface in which the set of separating curves contains only sufficiently short geodesics. 

Similar local transformations are considered in \cite{HatcherThurston} and two such pants decompositions represent adjacent vertices in $\Gamma_n$ if they are connected by a $(I)$--move\footnote{Called an $A$--move in \cite{Hatcher}}, while a $(IV)$--move\footnote{Called an $S$--move in \cite{Hatcher}} is always performed on a handle that corresponds to a loop in the pants decomposition graph, and thus is not taken into account when passing to $\Gamma_n$. In this way, there exists a graph morphism from the directed pants graph to $\Gamma_n$ which sends pants decomposition to their dual graphs, and edges to edges. 

As shown in \cite{Wolf-preprint, Wolf-thesis}, the vertex set of $\Gamma_n$ coincides with the vertex set of the pants graph quotient by the action of the mapping class group. Thus, $\Gamma_n$ is quasi--isometric to the latter. 

Thus, knowing the combinatorial properties of $\Gamma_n$ could be useful in the study of the geometric and combinatorial properties of the pants graph.


\section{Conductance and expansion}\label{section-wm:conductance-and-expansion}

Let $G$ be a connected directed graph with vertex set $V(G)$ and edge set $E(G)$. For a subset $S \subset V(G)$, let $d(S)$ denote the sum of vertex out--degrees in $S$, i.e. $d(S) = \sum_{v\in S} d_{\rm out}(v)$, where $d_{\rm out}(v)$ is the number of edges of the form $(v,u)$ where $u \in V(G)$. The \textit{boundary} $\partial(S)$ of a vertex subset $S \subset V(G)$ is defined as the set of directed edges in $E(G)$ joining a vertex in $S$ with a vertex in $V(G) \setminus S$, that is $\partial (S) = \{ e=(u,v) \in E(G) \ | \ u \in S \ , v \in S^c \} $. 

The \textit{outer--conductance} of $S \subset V(G)$ is defined as follows:
\[
\phi_{\rm out}(S) =  \frac{|\partial(S)|}{\min\{ d(S), d(V(G) - S)  \}}
\]
and the outer--conductance of $G$ is
\[
\phi_{\rm out}(G) = \min_{S \subset V(G)} \phi_{\rm out}(S).
\]

This is a generalization of the notion of conductance \cite[\S 6.2]{Chung}. Since we work with directed graphs, the definition is adapted so that the volume is measured with respect to the number of out--going edges. By doing so the outer--conductance measures how hard it is to escape a subset of the graph.

As a generalization of the notion of expander families to the case of directed graphs with vertex degrees (both out--degree and in--degree) growing with the number of vertices, we introduce \Cref{wm-def:outer_expander}.

\begin{definition}\label{wm-def:outer_expander}
We say that a sequence of directed graphs $G_n$ with out--degree and in--degree growing with the number of vertices is an \textit{outer--expander family} if the conductance $\phi_{\rm out}(G_n)$ is uniformly bounded away from 0 as $n$ tends to infinity. 
\end{definition}

Now we state our main result.  

\begin{theorem}\label{wm-thm:graph-of-graphs}
Let $\Gamma_n$ be the directed graph of cubic graphs on $2n$ ($n\geq 1$) vertices connected by Whitehead moves. Then $\Gamma_n$ is connected and $\phi_{\rm out}(\Gamma_n) \rightarrow 0$, as $n\to \infty$. Therefore $\{\Gamma_n\}_{n \in \mathbb{N}}$ is not an outer--expander family. 
\end{theorem}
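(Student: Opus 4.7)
The plan is to bound $\phi_{out}(\Gamma_n)$ from above by exhibiting an explicit subset $S_n \subset V(\Gamma_n)$ with outer-conductance tending to zero; connectedness of $\Gamma_n$ is classical (a Whitehead-reduction argument in the spirit of the pants-decomposition flip graph of \cite{Hatcher}), so I focus on the conductance bound. Let $T$ be the theta graph on two vertices (three parallel edges) and let $D$ be the dumbbell on two vertices (a loop at each endpoint joined by a single edge). Both are cubic, so for $0 \le k \le n$ the disjoint union $G_n^k := T^{\sqcup k} \sqcup D^{\sqcup (n-k)}$ is a cubic graph on $2n$ vertices, and the $n+1$ graphs $G_n^0, \ldots, G_n^n$ are pairwise non-isomorphic.

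A case analysis of the three possible re-pairings of the four dangling half-edges around a contracted edge yields three facts. First, both Whitehead moves on a non-loop edge of a $D$-component produce a $T$-component in its place, contributing $2(n-k)$ arrows $G_n^k \to G_n^{k+1}$. Second, on an edge of a $T$-component, one non-trivial Whitehead move produces a $D$-component while the other reproduces $T$ up to isomorphism, contributing $3k$ arrows $G_n^k \to G_n^{k-1}$ and $3k$ self-loops at $G_n^k$. Third, the $4(n-k)$ Whitehead moves performed on loops of dumbbells are trivial by convention. Summing accounts for all $6n$ out-arrows, and in particular the ladder $\mathcal{L}_n := \{G_n^k : 0 \le k \le n\}$ is closed under taking out-neighbours in $\Gamma_n$.

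Set $S_n := \{G_n^k : 0 \le k \le \lfloor n/2 \rfloor\}$. By the closure property the only arrows escaping $S_n$ are the $2\lceil n/2 \rceil$ arrows $G_n^{\lfloor n/2 \rfloor} \to G_n^{\lfloor n/2 \rfloor + 1}$, so $|\partial(S_n)| = \Theta(n)$. Every vertex of $\Gamma_n$ has out-degree $6n$, hence $d(S_n) = 6n(\lfloor n/2 \rfloor + 1) = \Theta(n^2)$, whereas the configuration-model count shows $|V(\Gamma_n)|$ to be super-exponential in $n$, so $d(V(\Gamma_n)\setminus S_n)$ dominates $d(S_n)$. Consequently
\[
\phi_{out}(\Gamma_n) \;\le\; \phi_{out}(S_n) \;=\; \frac{|\partial(S_n)|}{d(S_n)} \;=\; O\!\left(\frac{1}{n}\right) \;\longrightarrow\; 0.
\]

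The main technical task is the half-edge bookkeeping on $T$ and $D$: one must verify that exactly one of the two non-trivial Whitehead moves on a $T$-edge reproduces $T$ (contributing a self-loop of $\Gamma_n$ rather than an arrow to $G_n^{k-1}$). This asymmetry, together with the triviality of moves on loops, is exactly what makes the ladder $\mathcal{L}_n$ closed under out-neighbours; once closure is established, the rest of the argument is purely arithmetic.
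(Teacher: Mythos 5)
There is a fundamental problem with your witness sets: for $n\ge 2$ the graphs $G_n^k = T^{\sqcup k}\sqcup D^{\sqcup(n-k)}$ are disconnected, and they are not vertices of $\Gamma_n$. A Whitehead move is a local surgery that keeps the contracted edge present between the two re-formed vertices, so it can neither merge nor split connected components; the number of components is an invariant of the move. Consequently, if disconnected cubic graphs were admitted as vertices, $\Gamma_n$ could not possibly be connected, contradicting the first assertion of the very theorem you are proving (and which the paper establishes by reducing every \emph{connected} cubic graph to a canonical $(n,b)$--cubic tree via cycle reduction and tree rotations). Worse, your own observation that the ladder $\mathcal{L}_n$ is closed under out-neighbours would then give $\partial(\mathcal{L}_n)=\emptyset$ and hence $\phi_{out}(\Gamma_n)=0$ exactly for every $n$ -- a sign that the construction has escaped the intended ambient graph rather than probed its expansion. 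The local combinatorics you verify (theta $\leftrightarrow$ dumbbell under the two re-pairings) is correct, but it lives entirely outside $V(\Gamma_n)$, so the conductance bound does not apply to $\Gamma_n$.

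A repair would require a family of \emph{connected} cubic graphs forming a set $S_n$ with few escaping arrows and with both $d(S_n)$ and $d(V(\Gamma_n)\setminus S_n)$ under control; note that the denominator in $\phi_{out}$ is a minimum, so one must also show the complement is not too small. This is what the paper does: it takes $S_n = B_n$, the bridged connected cubic graphs, shows via random cubic graph theory (a.a.s.\ trivial automorphism group, $\mathbb{P}(\text{has a loop})\to 1-e^{-1}$, and a.a.s.\ Hamiltonicity of loopless cubic graphs) that $|B_n|\sim(1-e^{-1})|V(\Gamma_n)|$ and $|V(\Gamma_n)\setminus B_n|\sim e^{-1}|V(\Gamma_n)|$, and then argues by a case analysis on the bridge that a Whitehead move can destroy the last bridge only when performed on that bridge itself, so $|\partial(B_n)|\le 2|B_n|$ while $d(V(\Gamma_n)\setminus B_n)=6n\,|V(\Gamma_n)\setminus B_n|$, giving $\phi_{out}(\Gamma_n)=O(1/n)$. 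Your proposal, as written, does not reach this conclusion.
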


\section{Proof of \texorpdfstring{\Cref{wm-thm:graph-of-graphs}}{Main theorem}} \label{section-wm:proof-theorem}

Let $\Gamma_n$ be the directed graph of (isomorphism classes of) cubic graphs on $2n$ vertices ($n \geq 1$) with edges corresponding to Whitehead moves as defined in the introduction. The following two claims will imply \Cref{wm-thm:graph-of-graphs}.

\subsection{$\Gamma_n$ is connected.} 

This result in mentioned in \cite{Rafiwebpage} as attributed to Coco Zhang. We give a proof below by using the classical facts about tree rotations and rebalancing known in computer science.   

\begin{figure}[ht]
\centering
\includegraphics[scale=0.3]{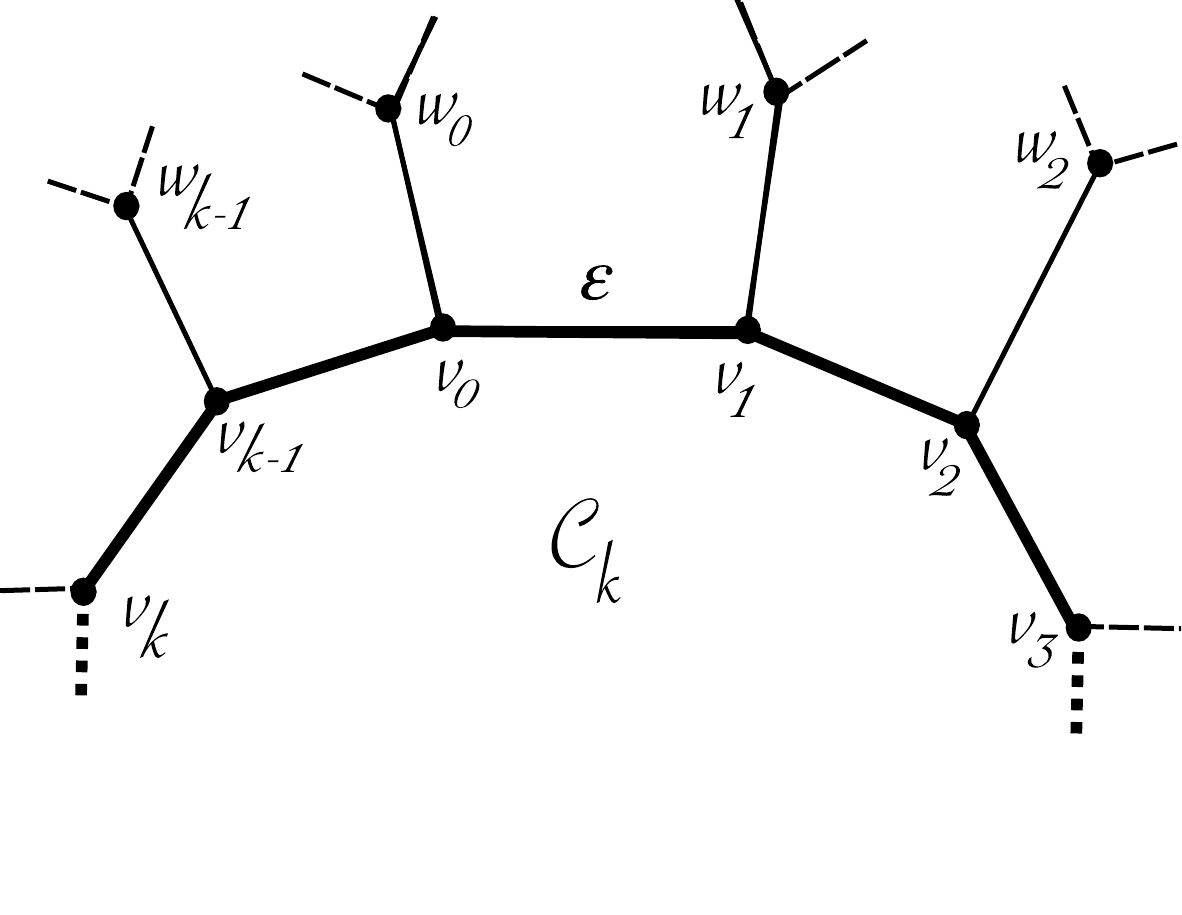}
\caption{The edge $\varepsilon = (v_0, v_1)$ that belongs to a $k$--cycle $C_k$ in $g \in V(\Gamma_n)$ on which a Whitehead move may be performed.} \label{wm-fig:cycle-1}
\end{figure}

Let $g$ be a cubic graph, and let $C_k$ be an indecomposable  $k$--cycle (with $k\geq 2$) in $g$, i.e. an induced cycle subgraph of $g$. 
If we perform a Whitehead move on the edge $\varepsilon$ that belongs to a $k$--cycle $C_k$, a part of which is depicted in \Cref{wm-fig:cycle-1}, we obtain a modified graph (partly) depicted in \Cref{wm-fig:cycle-2}.

\begin{figure}[ht]
\centering
\includegraphics[scale=0.3]{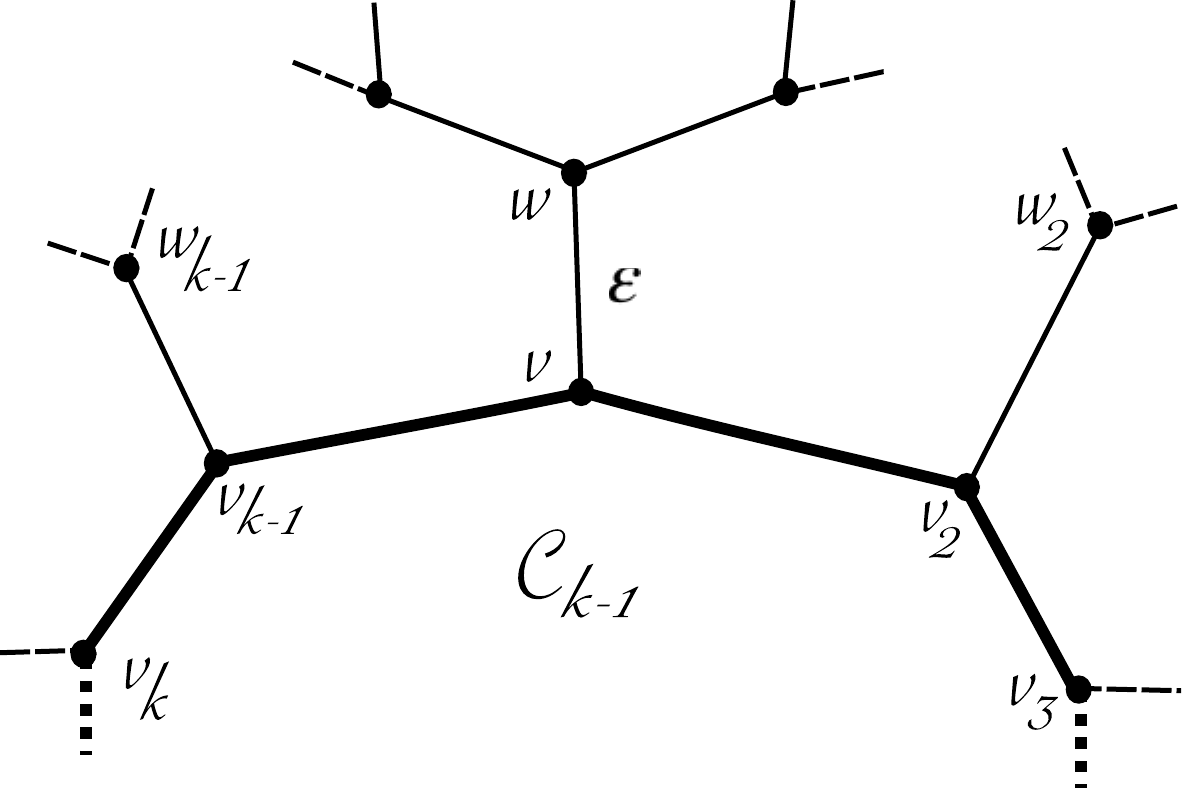}
\caption{ The resulting $(k-1)$--cycle $C_{k-1}$ in $\tilde{g} = w_\varepsilon(g)$: the edge $\varepsilon = (v, w)$ does not belong to $C_{k-1}$. } \label{wm-fig:cycle-2}
\end{figure}

The overall changes in the structure of $\tilde{g} = w_\varepsilon(g)$ as compared to $g$ are local and amount to the following:
\begin{itemize}
\item The $k$--cycle $C_k$ has been transformed into a $(k-1)$--cycle $C_{k-1}$ ;
\item If $v_1$, $w_1$ were part of an $l$--cycle $C_l$ then $C_l$ was transformed into an $(l+1)$--cycle $C_{l+1}$;
\item Same applies to any cycle that previously contained $w_0$ and/or $v_0$.
\end{itemize} 

By performing a total of $k-1$ Whitehead moves on the edges in $C_k$, we reduce it to a loop, as depicted in \Cref{wm-fig:cycle-3}. Here we reduced the total amount of $k$--cycles with $k\geq 2$ by one, although the lengths of some other cycles could have been augmented.

\begin{figure}[ht]
\centering
\includegraphics[scale=0.3]{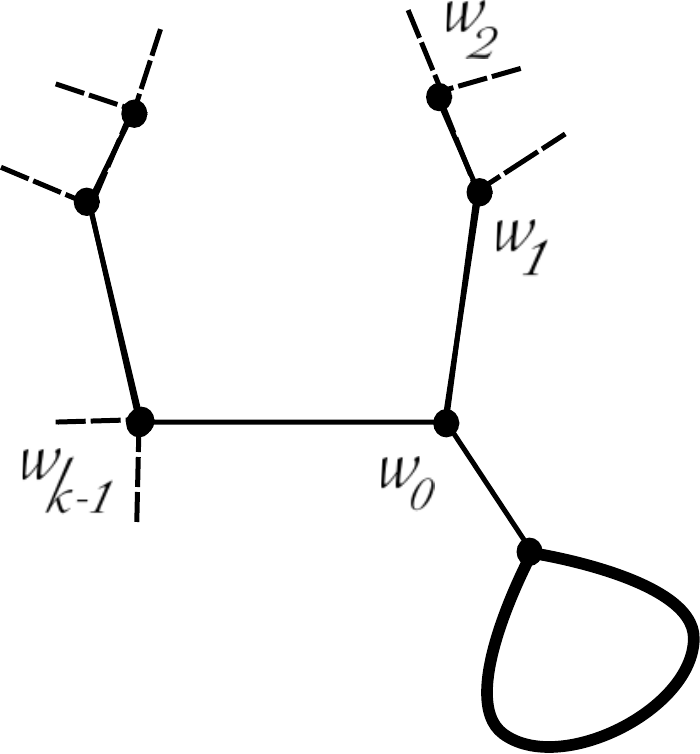}
\caption{ The $k$--cycle $C_k$ ($k \geq 2$) is finally reduced to a loop by $k-1$ consecutive Whitehead moves. } \label{wm-fig:cycle-3}
\end{figure}

By repeating the above procedure on the remaining $k$--cycles (with $k \geq 2$) of the resulting graph, we shall reduce it to a $(n,b)$--lollipop tree, as defined below, an example of which is depicted in \Cref{fig:tree-1}. Such a tree has $2n$ vertices and $b = n+1$ loops: note that $b$ is the $1$st Betty number of the initial graph $g$, since Whitehead moves preserve the number of vertices and edges of $g$, as well as the rank of $\pi_1(g)$.

\begin{figure}[ht]
\centering
\includegraphics[scale=0.3]{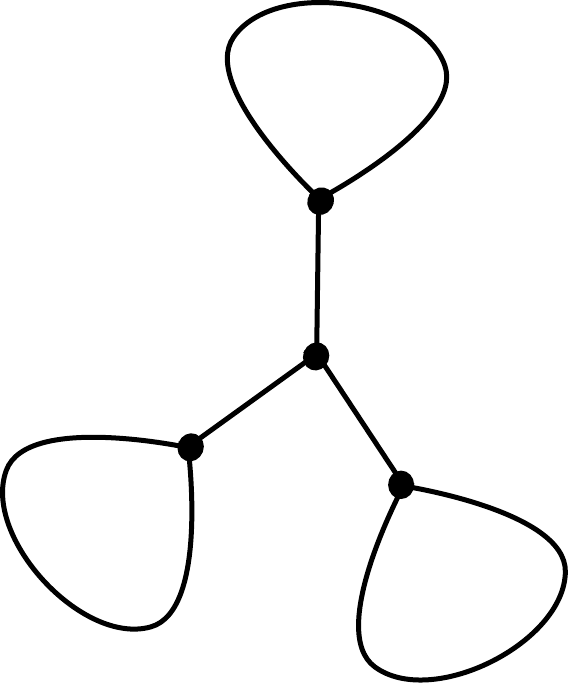}
\caption{ A $(2, 3)$--lollipop tree } \label{fig:tree-1}
\end{figure}

There exists a natural isomorphism between our $(n,b)$--lollipop trees with a chosen loop (i.e. rooted $(n,b)$--lollipop trees with a chosen loop $\varepsilon^*$ as a root), and binary rooted trees. Namely, let us take a rooted $(n,b)$--lollipop tree $(T, \varepsilon^*)$ and remove all the loops as well as the edge $(v^*, v)$ adjacent to $\varepsilon^*$ (with $\varepsilon^*$ being the loop at vertex $v$). The result, $(\tilde{T}, v^*)$ is a binary rooted tree on $2n-1$ vertices with $n$ leaves.

The general theory of binary trees and tree rebalancing \cite[\S 12 -- \S 13]{CLRS} shows that we can bring  $(\tilde{T}, v^*)$ to a unique complete rooted binary tree $T_n$ on $n$ vertices by a series of rotations. As \Cref{fig:tree-2} shows, a tree rotation can be achieved by a Whitehead move. This proves that $\Gamma_n$ is connected.

\begin{figure}[ht]
\centering
\includegraphics[scale=0.35]{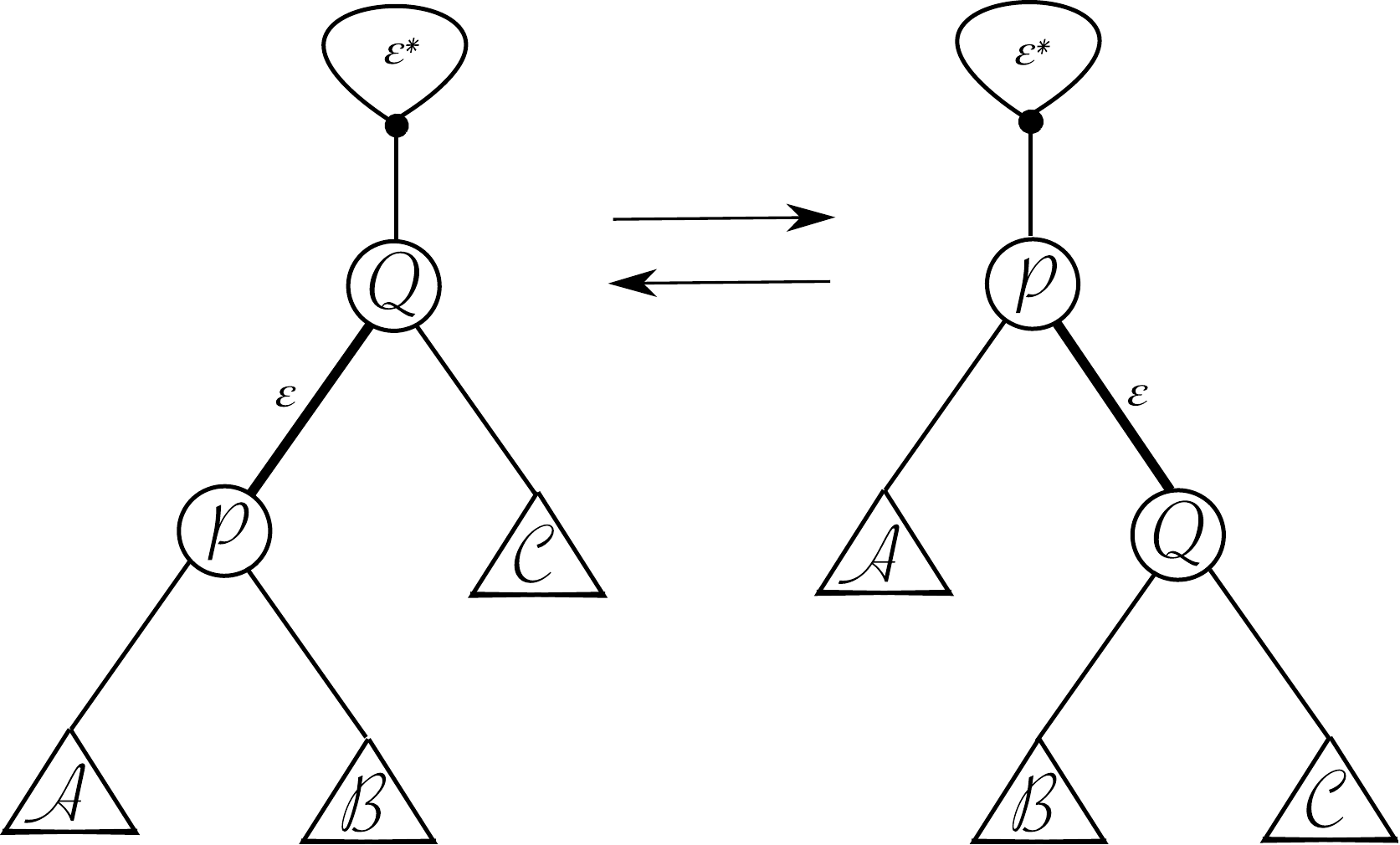}
\caption{A left and right rotation of a rooted binary tree (corresponding to an $(n, b)$--lollipop tree with root $\varepsilon^*$) realised by a Whitehead move on edge $e$. Here $P$, $Q$ are the vertices of $\varepsilon$, and $A$, $B$, and $C$ are the respective sub--trees } \label{fig:tree-2}
\end{figure}
 
\subsection{The outer--conductance of $\Gamma_n$ tends to 0.} 

A \textit{bridge} in a connected graph is an edge such that its removal produces several connected components. A graph with a bridge is called \textit{bridged}, while a graph that has no bridges at all is called \textit{bridgeless}. 

Let $B_n \subset V(\Gamma_n)$  be the set of (isomorphism classes of) connected cubic graphs on $2n$ vertices that have at least one bridge.  We shall estimate the probability $\mathbb{P}(g \in B_n)$ asymptotically. For this matter, notice that if $U_n = V(\Gamma_n)$ is the set of unlabelled cubic graphs (i.e. their isomorphism classes), and $L_n$ is the set of labelled ones, then $\mathbb{P}(g \in U_n \mbox{ has a bridge}) = \mathbb{P}(g \in L_n \mbox{ has a bridge}) + o(1)$, since according to \cite{Bollobas} we have that $|L_n| \sim|U_n| \cdot (2n)!$, as $n\to \infty$.

A \textit{Hamiltonian circuit} in a (connected) graph is a closed edge path that visits each vertex once (i.e. without self--intersection). A graph is called \textit{Hamiltonian} if it admits a Hamiltonian circuit. 

It is clear that a graph that has a bridge cannot be Hamiltonian, and hence every Hamiltonian cubic graph is bridgeless. 

Let ${\rm Bridged} = \{ g \in L_n \ | \ g \mbox{ has a bridge}  \}$ and ${\rm Looped} = \{ g \in L_n \ | \ g \mbox{ has a loop} \}$, and let ${\rm nH}$ be the set of graphs in $L_n$ that do not have a Hamiltonian circuit. Then
\begin{equation*}
\mathbb{P}(g \in {\rm Looped}) \leq \mathbb{P}(g \in {\rm Bridged}) \leq \mathbb{P}(g \in {\rm Looped}) + \mathbb{P}(g \notin {\rm Looped} \mbox{ and } g \in {\rm nH}),
\end{equation*}
since ${\rm Looped} \subseteq {\rm Bridged}$, as well as ${\rm Bridged} \subseteq {\rm nH}$.

By applying \cite[Theorem~9.5]{JLR}, we have that $\mathbb{P}(g \in {\rm Looped}) = 1 - e^{-1} + o(1)$, while \cite[Theorem~9.23]{JLR} implies that $\mathbb{P}(g \notin {\rm Looped} \mbox{ and } g \in {\rm nH}) = o(1)$, as $n\to \infty$. Thus, $\mathbb{P}(g \in {\rm Bridged}) = 1 - e^{-1} + o(1)$ asymptotically, as $n\to \infty$.

Together with the remarks above about the probabilities for labelled and unlabelled graphs, we obtain that $\mathbb{P}(g \in B_n)$, for $g \in V(\Gamma_n)$, satisfies $\mathbb{P}(g \in B_n) = 1 - e^{-1} + o(1)$, as $n\to \infty$. Thus, $|B_n| \sim (1 - e^{-1}) |V(\Gamma_n)|$, and $|V(\Gamma_n)\setminus B_n| \sim e^{-1} |V(\Gamma_n)|$. 

Note that the only vertices in $B_n$ connected with vertices in $V(\Gamma_n)\setminus B_n$ are those $g \in B_n$ having a single bridge, and the corresponding Whitehead moves have to be performed exactly on the bridge edge of $g$. 

Indeed, let $g \in V(\Gamma_n)$ have at least one bridge and let $\varepsilon^*=(u,v)$ denote one of its bridges. Then we can check case--by--case what the result of a Whitehead move on $\varepsilon^*$ could be. 

For this purpose, let us introduce the following equivalence relation on the vertices of $g$: for $v,w \in V(g)$, we write $ v \sim_g w$ if and only if there are two edge--disjoint paths connecting $v$ to $w$. Then we write $[v]_g= \{ w \in V(g) \ s.t. \ v \sim_g w \}$ for the equivalence class of $v$.

Let $\{a,b,c,d\}$ be the vertices adjacent to the bridge $\varepsilon^* \in E(g)$, and let $\tilde{g} = w_{\varepsilon^*}(g)$ be the graph resulting from a Whitehead move performed on $\varepsilon^*$. Then we consider the following five possible cases.  

\begin{enumerate}
\item $[a]_g = [b]_g = [c]_g = [d]_g$: in this case $\varepsilon^*$ is not a bridge neither in $g$, nor in $\tilde{g}$, as shown in \Cref{whitehead-1a}--\Cref{whitehead-1b}.

\begin{figure}[H]
\begin{minipage}[b]{0.45\textwidth}
\centering
\begin{tikzpicture}[scale=0.55]
\vertex (a) at (-1,1)[label=above left:$a$]{};
\vertex (b) at (-1,-1)[label=below left:$b$]{};
\vertex (u) at (0,0)[label=below right:$u$]{};
\vertex (v) at (2,0)[label=below left:$v$]{};
\vertex (c) at (3,1)[label=above right:$c$]{};
\vertex (d) at (3,-1)[label=below right:$d$]{};

\zbimh{(a)}{(c)}{120}{10};
\zbimv{(b)}{(a)}{120}{10};
\zbimv{(c)}{(d)}{120}{10};
\zbimh{(d)}{(b)}{120}{10};

\draw (a) edge [line width=1pt] (u);
\draw (b) edge [line width=1pt] (u);
\draw (u) edge [line width=1pt] node [above, fill=none] {$\varepsilon^*$} (v);
\draw (c) edge [line width=1pt] (v);
\draw (d) edge [line width=1pt] (v);
\end{tikzpicture}
\caption{ Local picture inside $g$ } \label{whitehead-1a}
\end{minipage}
\hfill
\begin{minipage}[b]{0.45\textwidth}
\centering
\begin{tikzpicture}[scale=0.55]
\vertex (a) at (-1,2)[label=above left:$a$]{};
\vertex (b) at (-1,-2)[label=below left:$b$]{};
\vertex (u) at (0,1)[label=below left:$u$]{};
\vertex (v) at (0,-1)[label=above left:$v$]{};
\vertex (c) at (1,2)[label=above right:$c$]{};
\vertex (d) at (1,-2)[label=below right:$d$]{};

\zbimh{(a)}{(c)}{120}{10};
\zbimv{(b)}{(a)}{120}{10};
\zbimv{(c)}{(d)}{120}{10};
\zbimh{(d)}{(b)}{120}{10};

\draw (a) edge [line width=1pt] (u);
\draw (c) edge [line width=1pt] (u);
\draw (v) edge [line width=1pt] node [right, fill=none] {$\varepsilon^*$} (u);
\draw (b) edge [line width=1pt] (v);
\draw (d) edge [line width=1pt] (v);
\end{tikzpicture}
\caption{ Local picture inside $\tilde{g}$ } \label{whitehead-1b}
\end{minipage}
\end{figure}

\item $[a]_g = [b]_g = [c]_g \neq [d]_g$: same as above, $\varepsilon^*$ is not a bridge neither in $g$, nor in $\tilde{g}$, see \Cref{whitehead-2a}--\Cref{whitehead-2b}.

\begin{figure}[H]
\begin{minipage}[b]{0.45\textwidth}
\centering
\begin{tikzpicture}[scale=0.55]
\vertex (a) at (-1,1)[label=above left:$a$]{};
\vertex (b) at (-1,-1)[label=below left:$b$]{};
\vertex (u) at (0,0)[label=below right:$u$]{};
\vertex (v) at (2,0)[label=below left:$v$]{};
\vertex (c) at (3,1)[label=above right:$c$]{};
\vertex (d) at (3,-1)[label=above right:$d$]{};

\zbimh{(a)}{(c)}{120}{10};
\zbimv{(b)}{(a)}{120}{10};
\zbimc{(d)}{-90}{0}{30};

\draw (a) edge [line width=1pt] (u);
\draw (b) edge [line width=1pt] (u);
\draw (u) edge [line width=1pt] node [above, fill=none] {$\varepsilon^*$} (v);
\draw (c) edge [line width=1pt] (v);
\draw (d) edge [line width=1pt] (v);
\end{tikzpicture}
\caption{ Local picture inside $g$ } \label{whitehead-2a}
\end{minipage}
\hfill
\begin{minipage}[b]{0.45\textwidth}
\centering
\begin{tikzpicture}[scale=0.55]
\vertex (a) at (-1,2)[label=above left:$a$]{};
\vertex (b) at (-1,-2)[label=below left:$b$]{};
\vertex (u) at (0,1)[label=below left:$u$]{};
\vertex (v) at (0,-1)[label=above left:$v$]{};
\vertex (c) at (1,2)[label=above right:$c$]{};
\vertex (d) at (1,-2)[label=above right:$d$]{};

\zbimh{(a)}{(c)}{120}{10};
\zbimv{(b)}{(a)}{120}{10};
\zbimc{(d)}{-90}{0}{30};

\draw (a) edge [line width=1pt] (u);
\draw (c) edge [line width=1pt] (u);
\draw (v) edge [line width=1pt] node [right, fill=none] {$\varepsilon^*$} (u);
\draw (b) edge [line width=1pt] (v);
\draw (d) edge [line width=1pt] (v);
\end{tikzpicture}
\caption{ Local picture inside $\tilde{g}$ } \label{whitehead-2b}
\end{minipage}
\end{figure}

\item $[a]_g = [b]_g \neq [c]_g = [d]_g$: in this case, $\varepsilon^*$ is a bridge in $g$, but not in $\tilde{g}$, see \Cref{whitehead-3a}--\Cref{whitehead-3b}.

\begin{figure}[H]
\begin{minipage}[b]{0.45\textwidth}
\centering
\begin{tikzpicture}[scale=0.55]
\vertex (a) at (-1,1)[label=above left:$a$]{};
\vertex (b) at (-1,-1)[label=below left:$b$]{};
\vertex (u) at (0,0)[label=below right:$u$]{};
\vertex (v) at (2,0)[label=below left:$v$]{};
\vertex (c) at (3,1)[label=above right:$c$]{};
\vertex (d) at (3,-1)[label=below left:$d$]{};

\zbimv{(c)}{(d)}{120}{10};
\zbimv{(b)}{(a)}{120}{10};

\draw (a) edge [line width=1pt] (u);
\draw (b) edge [line width=1pt] (u);
\draw (u) edge [line width=1pt] node [above, fill=none] {$\varepsilon^*$} (v);
\draw (c) edge [line width=1pt] (v);
\draw (d) edge [line width=1pt] (v);
\end{tikzpicture}
\caption{ Local picture inside $g$ } \label{whitehead-3a}
\end{minipage}
\hfill
\begin{minipage}[b]{0.45\textwidth}
\centering
\begin{tikzpicture}[scale=0.55]
\vertex (a) at (-1,2)[label=above left:$a$]{};
\vertex (b) at (-1,-2)[label=below left:$b$]{};
\vertex (u) at (0,1)[label=below left:$u$]{};
\vertex (v) at (0,-1)[label=above left:$v$]{};
\vertex (c) at (1,2)[label=above right:$c$]{};
\vertex (d) at (1,-2)[label=below left:$d$]{};

\zbimv{(c)}{(d)}{120}{10};
\zbimv{(b)}{(a)}{120}{10};

\draw (a) edge [line width=1pt] (u);
\draw (c) edge [line width=1pt] (u);
\draw (v) edge [line width=1pt] node [right, fill=none] {$\varepsilon^*$} (u);
\draw (b) edge [line width=1pt] (v);
\draw (d) edge [line width=1pt] (v);
\end{tikzpicture}
\caption{ Local picture inside $\tilde{g}$ } \label{whitehead-3b}
\end{minipage}
\end{figure}

\item   $[a]_g = [b]_g \neq [c]_g$ , $[a]_g = [b]_g \neq [d]_g$, and $[c]_g \neq [d]_g$: again, $\varepsilon^*$ is a bridge in $g$, but not in $\tilde{g}$, see \Cref{whitehead-4a}--\Cref{whitehead-4b}.

\begin{figure}[H]
\begin{minipage}[b]{0.45\textwidth}
\centering
\begin{tikzpicture}[scale=0.55]
\vertex (a) at (-1,1)[label=above left:$a$]{};
\vertex (b) at (-1,-1)[label=below left:$b$]{};
\vertex (u) at (0,0)[label=below right:$u$]{};
\vertex (v) at (2,0)[label=below left:$v$]{};
\vertex (c) at (3,1)[label=above right:$c$]{};
\vertex (d) at (3,-1)[label=below left:$d$]{};

\zbimc{(c)}{-90}{30}{30};
\zbimc{(d)}{-90}{30}{30};
\zbimv{(b)}{(a)}{120}{10};

\draw (a) edge [line width=1pt] (u);
\draw (b) edge [line width=1pt] (u);
\draw (u) edge [line width=1pt] node [above, fill=none] {$\varepsilon^*$} (v);
\draw (c) edge [line width=1pt] (v);
\draw (d) edge [line width=1pt] (v);
\end{tikzpicture}
\caption{ Local picture inside $g$ } \label{whitehead-4a}
\end{minipage}
\hfill
\begin{minipage}[b]{0.45\textwidth}
\centering
\begin{tikzpicture}[scale=0.55]
\vertex (a) at (-1,2)[label=above left:$a$]{};
\vertex (b) at (-1,-2)[label=below left:$b$]{};
\vertex (u) at (0,1)[label=below left:$u$]{};
\vertex (v) at (0,-1)[label=above left:$v$]{};
\vertex (c) at (1,2)[label=above right:$c$]{};
\vertex (d) at (1,-2)[label=below left:$d$]{};

\zbimc{(c)}{-90}{30}{30};
\zbimc{(d)}{-90}{30}{30};
\zbimv{(b)}{(a)}{120}{10};

\draw (a) edge [line width=1pt] (u);
\draw (c) edge [line width=1pt] (u);
\draw (v) edge [line width=1pt] node [right, fill=none] {$\varepsilon^*$} (u);
\draw (b) edge [line width=1pt] (v);
\draw (d) edge [line width=1pt] (v);
\end{tikzpicture}
\caption{ Local picture inside $\tilde{g}$ } \label{whitehead-4b}
\end{minipage}
\end{figure}

\item  $[a]_g , [b]_g , [c]_g , [d]_g$ are all distinct: obviously, $\varepsilon^*$ is a bridge in both $g$ and $\tilde{g}$, see \Cref{whitehead-5a}--\Cref{whitehead-5b}.

\begin{figure}[H]
\begin{minipage}[t]{0.45\textwidth}
\centering
\begin{tikzpicture}[scale=0.55]
\vertex (a) at (-1,1)[label=above right:$a$]{};
\vertex (b) at (-1,-1)[label=below right:$b$]{};
\vertex (u) at (0,0)[label=below right:$u$]{};
\vertex (v) at (2,0)[label=below left:$v$]{};
\vertex (c) at (3,1)[label=above right:$c$]{};
\vertex (d) at (3,-1)[label=below left:$d$]{};

\zbimc{(c)}{-90}{30}{30};
\zbimc{(d)}{-90}{30}{30};
\zbimc{(a)}{90}{180}{30};
\zbimc{(b)}{180}{-90}{30};

\draw (a) edge [line width=1pt] (u);
\draw (b) edge [line width=1pt] (u);
\draw (u) edge [line width=1pt] node [above, fill=none] {$\varepsilon^*$} (v);
\draw (c) edge [line width=1pt] (v);
\draw (d) edge [line width=1pt] (v);
\end{tikzpicture}
\caption{ Local picture inside $g$  } \label{whitehead-5a}
\end{minipage}
\hfill
\begin{minipage}[t]{0.45\textwidth}
\centering
\begin{tikzpicture}[scale=0.55]
\vertex (a) at (-1,2)[label=above right:$a$]{};
\vertex (b) at (-1,-2)[label=below right:$b$]{};
\vertex (u) at (0,1)[label=below left:$u$]{};
\vertex (v) at (0,-1)[label=above left:$v$]{};
\vertex (c) at (1,2)[label=above right:$c$]{};
\vertex (d) at (1,-2)[label=below left:$d$]{};

\zbimc{(c)}{-90}{30}{30};
\zbimc{(d)}{-90}{30}{30};
\zbimc{(a)}{90}{180}{30};
\zbimc{(b)}{180}{-90}{30};

\draw (a) edge [line width=1pt] (u);
\draw (c) edge [line width=1pt] (u);
\draw (v) edge [line width=1pt] node [right, fill=none] {$\varepsilon^*$} (u);
\draw (b) edge [line width=1pt] (v);
\draw (d) edge [line width=1pt] (v);
\end{tikzpicture}
\caption{Local picture inside $\tilde{g}$ } \label{whitehead-5b}
\end{minipage}
\end{figure}

\end{enumerate}
    
According to the observations above, if a bridged graph $g \in B_n$ can be transformed into a bridgeless one $\tilde{g} = w_\varepsilon(g)$, then $\varepsilon$ is the only bridge of $g$. Every time, both Whitehead moves either succeed or fail to bring $\tilde{g}$ outside $B_n$. This implies that $|\partial(B_n)| \leq 2 |B_n|$. Thus the outer--conductance of the set $B_n$ satisfies 
\begin{align*}
\phi_{\rm out}(B_n) &= \frac{|\partial(B_n)|}{\min\{ d(B_n), d(V(\Gamma_n)\setminus B_n) \}} \leq \frac{2 |B_n|}{ 6n |V(\Gamma_n)\setminus B_n|} \sim \\
&\sim \frac{2 |V(\Gamma_n)| (1-e^{-1})}{6n |V(\Gamma_n)| e^{-1}} = \frac{e-1}{3n} \to 0,
\end{align*} 
as $n\to \infty$. \hfill $\square$

\begin{remark}
It appears that $\Gamma_n$ is itself a relatively ``thin'' neighbourhood of $B_n$. Indeed, each length $k$ cycle in a graph $g \in V(\Gamma_n)$ can be reduced by $k-2$ Whitehead moves to a $2$--cycle, and one more Whitehead move will create an edge incident to a loop. The standard asymptotic bound on girth is $\mathrm{girth}(g) \lesssim 2 \log_2 n$. If $v = |V(\Gamma_n)|$, then $v \sim n^n$ \cite{Bollobas}, and thus each vertex of $\Gamma_n$ happens to be within $\sim \log_2 \log_2 v$ distance from $B_n$. As shown in \cite{RafiTao}, the diameter of $\Gamma_n$ satisfies $d = \mathrm{diam}(\Gamma_n) \sim n \log_2 n$, and thus each vertex of $\Gamma_n$ is within $\log_2 d$ of $B_n$.
\end{remark}

\section{Combinatorics of paths}

Let $A$ be a non--zero square $k\times k$ ($k\geq 2$) matrix with non--negative integer entries. Such a matrix is called \textit{non--negative}, for brevity. If all the entries of $A$ are actually positive, then $A$ is called \textit{positive}.

If there exists a permutation matrix $P$ such that $PAP^{-1}$ has upper--triangular block form, $A$ is called \textit{reducible}. Otherwise, $A$ is \textit{irreducible}.

For $i \in \{1, \dots, k\}$, the \textit{$i$--th period} of $A$ is the greatest common divisor of all integers $m$ such that the $i$--th diagonal entry of $A^m$ is positive. If $A$ is irreducible, then all its periods coincide, and each of them equals \textit{the period} of $A$. If the period of $A$ equals $1$, then $A$ is called \textit{aperiodic}. 

An aperiodic and irreducible non--negative matrix $A$ is called \textit{primitive}. If $A$ is a primitive matrix, then its spectral radius is a Perron number, as a consequence of the Perron--Frobenius theorem \cite[Theorem 4.5.11]{LM}. Recall that a \textit{Perron number} is an algebraic integer $\rho > 1$ such that all of its other Galois conjugates have modulus strictly less than $\rho$. Perron numbers often appear in dynamical context, cf. \cite{LM}. Another property of the spectral radius of a primitive matrix $A$ is that this eigenvalue has multiplicity one.  

Let $\Gamma$ be a directed graph, and let $A$ be its adjacency matrix defined as follows:
\begin{equation*}
A_{uv} = \mbox{ the number of directed edges joining } u \mbox{ to } v, \mbox{ if } u\neq v, \mbox{ and }
\end{equation*}
\begin{equation*}
A_{vv} = \mbox{ the number of loops incident to } v.
\end{equation*}

Then $A$ is irreducible if and only if $\Gamma$ is \textit{strongly connected}, i.e. there exists a path of directed edges in $\Gamma$ between any pair of distinct vertices. If $\Gamma$ is strongly connected and the greatest common divisor of its directed cycle lengths equals $1$, then $A$ is aperiodic, and thus primitive. Thus, in this case we can deduce that the spectral radius of $A$ is a Perron number by considering the combinatorics of $\Gamma$.

\begin{corollary}\label{cor:graph-of-graphs-1}
The graph $\Gamma_n$ is strongly connected and aperiodic. The number of length $\ell$ paths in $\Gamma_n$ is asymptotic to ${\rm const} \cdot \rho^\ell_n$, with $\rho_n$ a Perron number, if $\ell \to \infty$.  
\end{corollary}

\begin{proof}
For every $n\geq 1$, there exists an $(n, b)$--lollipop tree $t\in V(\Gamma_n)$, with $b = n+1 \geq 2$ loops $\varepsilon_1, \dots, \varepsilon_b$. Then, $w_{\varepsilon_i}(t) = t$, for any $i=1, \dots, b$, and thus $\Gamma_n$ has loops. This implies that $\Gamma_n$ is aperiodic.

\begin{remark}
    There exist a $3$--cycle and a $4$--cycle in each $\Gamma_n$, for $n\geq 3$. Namely, we can choose the cycle in \Cref{fig:coprime-cycles}. The subgraph of $\Gamma_3$ depicted in \Cref{fig:coprime-cycles} embeds in $\Gamma_n$ by replacing the left-most loop in each cubic graph with a sequence of ``lollipop'' graphs.  
\end{remark}

\begin{figure}[ht]
\centering

\begin{tikzpicture}[scale=0.1]
  \tikzset{vertex/.style={rectangle, draw, align=center, inner sep=2pt}}

  \node[vertex] (a) at (0, 0) {\includegraphics[scale=0.25]{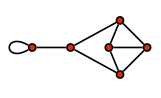}};
  \node[vertex] (b) at (30, 0) {\includegraphics[scale=0.25]{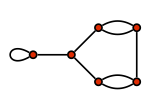}};
  \node[vertex] (c) at (15, 26) {\includegraphics[scale=0.25]{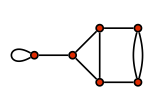}}; 
  
  \node[vertex] (d) at (15, 52) {\includegraphics[scale=0.25]{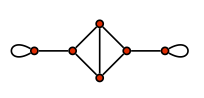}};
  \node[vertex] (e) at (-15, 52) {\includegraphics[scale=0.25]{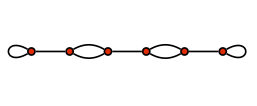}};
  \node[vertex] (f) at (-15, 26) {\includegraphics[scale=0.25]{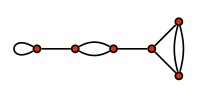}};
  
  \draw[-latex] (a) -- (b);
  \draw[-latex] (b) -- (c);
  \draw[-latex] (c) -- (a);
  
  \draw[-latex] (c) -- (d);
  \draw[-latex] (d) -- (e);
  \draw[-latex] (e) -- (f);
  \draw[-latex] (f) -- (c);

\end{tikzpicture}

\caption{Two cycles in $\Gamma_3$ of coprime lengths having a common vertex. Each of them is a cubic graph containing a ``lollipop'' (a loop incident to an edge) subgraph on the left. By replacing this lollipop subgraph with a lollipop tree on $2n-4$ vertices minus a loop, we obtain a similar picture inside $\Gamma_n$ for every $n\geq 4$ as well.}\label{fig:coprime-cycles}
\end{figure}

Moreover $\Gamma_n$ is strongly connected, as follows from the proof of Theorem~\ref{wm-thm:graph-of-graphs}. Thus, we obtain that $A_n$ is  primitive, and can now apply the Perron--Frobenius theorem. 

Hence the spectral radius $\rho_n = \rho(A_n)$ is a multiplicity $1$ eigenvalue with a positive eigenvector $v$, and $\rho_n$ is a Perron number. Then the asymptotic number of paths follows from \cite[Theorem 4.5.12]{LM} by a standard computation. 
\end{proof}

\section{Related objects}\label{wm-section:related}

Above we consider the graph of graphs $\Gamma_n$ of all possible cubic graphs on $2n$ vertices, where two vertices, $g_1, g_2 \in V(\Gamma_n)$, are connected by a directed edge for each Whitehead move $w_\varepsilon$, for $\varepsilon \in E(g_1)$, such that $w_\varepsilon(g_1)$ is isomorphic to $g_2$.

However, there are several ways to modify this definition. Indeed, we could modify the edges of $\Gamma_n$ in order to turn it into an undirected graph, a simple graph or other types of graphs. These modifications can provide relations between the graph of graphs and other mathematical problems. In this section we will explore some of these connections.

\subsection{Matchings on graphs}\label{wm-related:matchings}

Let us consider two edges $e$, $f$ in $g \in \Gamma_n$ that are not incident to each other. Note that the Whitehead move $w^{(k)}_e(g)$, $k=1,2$, only affects the $4$ half--edges incident to the vertices of $e$ (and not belonging to $e$). The same holds for $w^{(k)}_f(g)$, $k=1,2$. Thus, the respective sets of half--edges are disjoint, which implies that $w_e(g)$ and $w_f(g)$ commute. We will call this transformation a \emph{simultaneous Whitehead move on the set $\{e, f\}$}, cf. \cite{RafiTao}.

Thus, we can consider a modified graph $\Gamma'_n$ with $V(\Gamma'_n) = V(\Gamma_n)$ where $g_1, g_2 \in V(\Gamma'_n)$ are connected by a directed edge if there exists a Whitehead move on some set of non--incident edges of $g_1$ mapping $g_1$ to $g_2$.

Each vertex matching of cardinality $m\geq 1$ in $g_1$ gives rise to $2^m$ simultaneous Whitehead moves. Thus, knowing the number of cardinality $m\geq 1$ matchings in $g_1$ allows us to compute the degree of $g_1$ in $\Gamma'_n$. 

\subsection{Outer space and its spine}\label{wm-related:out}

Let $F_n$ be the free group on $n \geq 2$ letters. In \cite{culler1986moduli}, Culler and Vogtmann introduced a space $X_n$ on which the group $Out(F_n)$ acts. This space can be thought of as analogous to the Teichm\"uller space of a surface with the action of the mapping class group of the surface. 

The description of $X_n$ is complicated, and we will not detail it here. We will only briefly mention the relevant aspects of this object and how it could relate to our graph of graphs.

The space $X_n$ is an infinite, finite--dimensional simplicial complex. A point in $X_n$ is defined as a metric graph (where edges have assigned lengths) of total length $1$ with fundamental group $F_n$ equipped with a homotopy equivalence to the bouquet of $n$ circles, called a \emph{marking}. Each open simplex in $X_n$ is obtained by varying the lengths of the edges in a fixed marked topological graph in such a way that their sum remains equal to $1$. The group $\mathrm{Out}(F_n)$ acts on $X_n$ by change of markings.

Two given $k$-simplices $x_1$ and $x_2$ share a face of codimension 1 if one can pass from a point in $x_1$ to a point in $x_2$ by collapsing an edge and re-opening it, i.e., by a continuous version of a Whitehead move. Observe that some simplices have missing faces corresponding to loops in the graph, since collapsing a loop would change the fundamental group. Note that simplices of maximal dimension correspond to marked trivalent metric graphs.

The space $X_n$ admits a \emph{spine} $K_n$ which is a deformation retract of $X_n$: the vertices of $K_n$ are marked graphs considered without edge lengths. This spine $K_n$ has the structure of a simplicial complex, in fact it can be identified with the geometric realization of the partially ordered set of open simplices of $X_n$\footnote{This is, word for word, the description given in \cite{vogtmann2002automorphisms}}. The group $Out(F_n)$ acts on $K_n$ with compact quotient. 

Our graph of graphs $\Gamma_n$ is related to $K_{n+1}/Out(F_{n+1})$ in the following way. Let us consider the dual graph to the collection of top--dimensional simplices in $X_{n+1}$, then take its quotient by $Out(F_{n+1})$. This will yield a graph that $\Gamma_n$ admits a morphism onto.

\section*{Acknowledgements}
LGdP was supported by the FWO and the F.R.S.--FNRS under the Excellence of Science (EOS) programme project ID~40007542. Both authors thank the anonymous referees for their comments and suggestions that helped improving the manuscript. 

{
\renewcommand*\MakeUppercase[1]{#1}
\bibliographystyle{alpha}							
\bibliography{biblio}						

\begin{thebibliography}{CLRS09}

\bibitem[Bol82]{Bollobas}
B{\'e}la Bollob{\'a}s.
\newblock The asymptotic number of unlabelled regular graphs.
\newblock {\em Journal of the London Mathematical Society}, 2(2):201--206,
  1982.

\bibitem[Chu97]{Chung}
Fan Chung.
\newblock {\em Spectral graph theory}.
\newblock American Mathematical Soc., 1997.

\bibitem[CLRS09]{CLRS}
Thomas~H Cormen, Charles~E Leiserson, Ronald~L Rivest, and Clifford Stein.
\newblock {\em Introduction to algorithms}.
\newblock MIT press, 2009.

\bibitem[CV86]{culler1986moduli}
Marc Culler and Karen Vogtmann.
\newblock Moduli of graphs and automorphisms of free groups.
\newblock {\em Inventiones mathematicae}, 84(1):91--119, 1986.

\bibitem[DP18]{DP2}
Valentina Disarlo and Hugo Parlier.
\newblock Simultaneous flips on triangulated surfaces.
\newblock {\em Michigan Mathematical Journal}, 67(3):451--464, 2018.

\bibitem[DP19]{DP1}
Valentina Disarlo and Hugo Parlier.
\newblock The geometry of flip graphs and mapping class groups.
\newblock {\em Transactions of the American Mathematical Society},
  372(6):3809--3844, 2019.

\bibitem[Hat99]{Hatcher}
Allen Hatcher.
\newblock Pants decompositions of surfaces.
\newblock {\em arXiv preprint math/9906084}, 1999.

\bibitem[HT80]{HatcherThurston}
Allen Hatcher and William Thurston.
\newblock A presentation for the mapping class group of a closed orientable
  surface.
\newblock {\em Topology}, 19(3):221--237, 1980.

\bibitem[JLR11]{JLR}
Svante Janson, Tomasz Luczak, and Andrzej Rucinski.
\newblock {\em Random graphs}, volume~45.
\newblock John Wiley \& Sons, 2011.

\bibitem[LM21]{LM}
Douglas Lind and Brian Marcus.
\newblock {\em An introduction to symbolic dynamics and coding}.
\newblock Cambridge university press, 2021.

\bibitem[Raf24]{Rafiwebpage}
K.~Rafi.
\newblock K.~{R}afi's webpage, 2024.
\newblock \url{http://www.math.toronto.edu/~rafi/gofg.html}.

\bibitem[RT13]{RafiTao}
Kasra Rafi and Jing Tao.
\newblock The diameter of the thick part of moduli space and simultaneous
  whitehead moves.
\newblock {\em Duke Mathematical Journal}, 162(10):1833--1876, 2013.

\bibitem[Vog02]{vogtmann2002automorphisms}
Karen Vogtmann.
\newblock Automorphisms of free groups and outer space.
\newblock {\em Geometriae Dedicata}, 94:1--31, 2002.

\bibitem[Wol09a]{Wolf-preprint}
Ute Wolf.
\newblock The action of the mapping class group on the pants complex.
\newblock 2009.
\newblock
  \url{https://www.math.kit.edu/iag3/\~wolf/media/wolf-the-action-of-the-mapping-class-group-on-the-pants-complex.pdf}.
  \textit{{P}reprint.}

\bibitem[Wol09b]{Wolf-thesis}
Ute Wolf.
\newblock Die aktion der abbildungsklassengruppe auf dem hosenkomplex.
\newblock {\em {P}h{D} {T}hesis, {K}arlsruher {I}nstitut f\"{u}r
  {T}echnologie}, 2009.

\end{thebibliography}
}
\end{document}